\newtheorem*{mainthm}{Main Theorem}
\newtheorem{thm}{Theorem}[section]
\newtheorem{prop}[thm]{Proposition}
\newtheorem{lemma}[thm]{Lemma}
\newtheorem{cor}[thm]{Corollary}
\theoremstyle{definition}
\newtheorem*{notation}{Notation}
\newtheorem{remark}[thm]{Remark}
\newtheorem{exmpl}[thm]{Example}
\newcommand{\OX}{\ensuremath{\mathcal O_X}}
\newcommand{\OU}{\ensuremath{\mathcal O_U}}
\DeclareMathOperator{\QCoh}{QCoh}
\newcommand{\QCohX}{\ensuremath{\QCoh(X)}}
\newcommand{\QCohU}{\ensuremath{\QCoh(U)}}
\newcommand{\OXMod}{\ensuremath{\OX\text{\rm-Mod}}}
\newcommand{\RMod}{\ensuremath{R\text{\rm-Mod}}}
\newcommand{\res}{\mathrm{res}}
\newcommand{\qc}{\mathrm{qc}}
\newcommand{\op}{\mathrm{op}}
\newcommand{\ito}{\hookrightarrow}
\newcommand{\G}{\mbc G}
\newcommand{\GU}{\mbc G_U}
\newcommand{\GW}{\mbc G_W}
\DeclareMathOperator{\Hom}{Hom}
\DeclareMathOperator{\Hh}{H}
\DeclareMathOperator{\Spec}{Spec}
\newcommand{\HOM}{\operatorname{\mathbcal{Hom}}}
\newcommand{\HOMqc}{\operatorname{\mathbcal{Hom}}^{\qc}}
\let\mc\mathcal
\let\mbc\mathbcal
\begin{document}
\title{On flat generators and Matlis duality for quasicoherent sheaves}
\author{Alexander Sl\'avik}
\address{Department of Algebra, Charles University, Faculty of Mathematics and Physics, Sokolovsk\'a 83, 186\,75 Prague 8, Czech Republic}
\email{slavik.alexander@seznam.cz}
\author{Jan \v S\v tov\'\i\v cek}
\address{Department of Algebra, Charles University, Faculty of Mathematics and Physics, Sokolovsk\'a 83, 186\,75 Prague 8, Czech Republic}
\email{stovicek@karlin.mff.cuni.cz}
\thanks{Both authors were supported from the grant GA \v CR 17-23112S of the Czech Science Foundation. The first author was also supported from the grant SVV-2017-260456 of the SVV project and from the grant UNCE/SCI/022 of the Charles University Research Centre.}

\begin{abstract}
We show that for a quasicompact quasiseparated scheme $X$, the following assertions are equivalent: (1) the category \QCohX\ of all quasicoherent sheaves on $X$ has a flat generator; (2) for every injective object $\mc E$ of \QCohX, the internal hom functor into $\mc E$ is exact; (3) the scheme $X$ is semiseparated.
\end{abstract}

\subjclass[2010]{14F05, 13C11}

\maketitle

\section{Introduction}

Let $X$ be a scheme. It is well-known that unless the scheme is affine, the category $\QCohX$ of all quasicoherent sheaves on $X$ does not usually have enough projective objects.
In fact, a quasiprojective scheme over a field is affine if and only if  $\QCohX$ has enough projective objects, \cite[Theorem 1.1]{Kan}, and a direct proof that $\QCohX$ has no non-zero projective objects if $X$ is the projective line over a field can be found in \cite[Corollary 2.3]{EEG}.

Such an issue is often fixed using some flat objects; recall that a quasicoherent sheaf $\mc M$ is called \emph{flat} if for any open affine set $U \subseteq X$, the $\OX(U)$-module $\mc M(U)$ is flat. Murfet in his thesis \cite{M} showed that for $X$ quasicompact and semiseparated, every quasicoherent sheaf is a quotient of a flat one
(recall that a scheme is called \emph{semiseparated} if the intersection of any two open affine sets is affine; this differs from the original definition from \cite{TT}, but is shown to be equivalent in \cite{AJPV}).
A short proof of the same fact, attributed to Neeman, can be found in \cite[Appendix A]{EP}, which was (under the same assumptions) later improved by Positselski \cite[Lemma 4.1.1]{P} by showing that so-called very flat sheaves are sufficient for this job. These results can be rephrased that the category $\QCohX$ has a flat generator.

It was hoped for a long time that the existence of a flat generator can be extended at least to the case of quasicompact \emph{quasiseparated} schemes (i.e.\ those for which the intersection of any two open affine sets is quasicompact), which encompass a considerably wider class of ``natural'' examples arising in algebraic geometry, while being an assumption rather pleasant to work with.
However, our results show that for quasicompact quasiseparated schemes, semiseparatedness is in fact \emph{necessary} for the existence of enough flat quasicoherent sheaves.

In this context, we note that is has been already known that semiseparatedness is necessary for the existence of a generating set consisting of vector bundles. This is a consequence of much more involved structure theorems for stacks, see~\cite[Proposition 1.3]{Tot} and \cite[Theorem 1.1(iii)]{Gr}. Here we present a stronger version of that consequence with a much simpler proof.

\medskip

A question closely related to the existence of a flat generator turns out to be the exactness of the Matlis duality functor. If $R$ is a commutative ring and $E$ an injective cogenerator of the category \RMod, the Matlis duality functor $\Hom_R(-, E)\colon \RMod^\op \to \RMod$ has been considered on numerous occasions in the literature and one of its fundamental properties is that it is exact.

If $X$ is a possibly non-affine scheme, we can consider an analogous duality of the category $\QCohX$ of quasi-coherent sheaves. Namely, $\QCohX$ has an internal hom functor $\HOMqc$ which is right adjoint to the usual tensoring of sheaves of $\OX$-modules, and we can consider the functor $\HOMqc(-, \mc E)\colon \QCohX^\op \to \QCohX$ for an injective cogenerator $\mc E\in\QCohX$. For a simple formal reason which we discuss below, $\HOMqc(-, \mc E)$ is exact provided that $\QCohX$ has a flat generator, so in particular if $X$ is quasicompact and semiseparated. Perhaps somewhat surprisingly, we prove that for quasicompact quasiseparated schemes, semiseparatedness is again a \emph{necessary} condition for the exactness.

To summarize, our main result reads as follows:

\begin{mainthm}[see Theorems~\ref{no-flat-quotient} and~\ref{hom-not-exact}]
Let $X$ be a quasicompact and quasiseparated scheme. Then the following assertions are equivalent:
\begin{enumerate}
\item the category\/ \QCohX\ of all quasicoherent sheaves on $X$ has a flat generator;
\item for every injective object $\mc E$ of\/ \QCohX, the contravariant internal hom functor $\HOMqc(-, \mc E)$ is exact;
\item the scheme $X$ is semiseparated.
\end{enumerate}
\end{mainthm}


The paper is organized as follows.
In Section \ref{section-no-flat-generator}, we give a direct proof that for a non-semiseparated scheme $X$, the category \QCohX\ does not have a flat generator. The proof is rather constructive, producing a quasicoherent sheaf which is not a quotient of a flat one. Section \ref{section-internal-hom-exactness} then provides the characterization of semiseparated schemes using the exactness of the internal hom $\HOMqc(-, \mc E)$ for every injective quasicoherent sheaf $\mc E$. This in fact gives another, less explicit proof of the results of Section \ref{section-no-flat-generator}.

\begin{notation}
If $R$ is a commutative ring and $M$ an $R$-module, then by $\tilde M$ we denote the quasicoherent sheaf on $\Spec R$ with $M$ as the module of global sections. If the formula describing the module is too long and the tilde would not be wide enough, we use the notation like $M^\sim$.

If $U$ is an open subset of a scheme $X$, which is usually clear from the context, then $\iota_U \colon U \to X$ denotes the inclusion and $\iota_{U,*}$ the direct image functor. Since we are dealing only with quasicompact open sets, $\iota_U$ is a quasicompact and quasiseparated map, hence $\iota_{U,*}$ sends quasicoherent sheaves to quasicoherent sheaves by \cite[03M9]{Stacks}.
\end{notation}

\section{Non-existence of flat generators}\label{section-no-flat-generator}

Let $\mc M$ be a quasicoherent sheaf on an affine scheme $X$. If $U$ is an open affine subset of $X$ then it is a part of the very definition of a quasicoherent sheaf on $X$ that the map $\mc M(X) \otimes_{\OX(X)} \OX(U) \to \mc M(U)$ is an isomorphism of $\OX(U)$-modules. If $U$ is not affine, this may not be the case. However, more can be said for flat sheaves:

\begin{lemma}\label{flat-lemma}
Let $U$ be a quasicompact open subset of an affine scheme $X$ and $\mc F$ a flat quasicoherent sheaf on $X$. Then
\[ \mc F(U) \cong \mc F(X) \otimes_{\OX(X)} \OX(U), \]
where the isomorphism is obtained by tensoring the restriction map $\mc F(X) \to \mc F(U)$ with $\OX(U)$.
\end{lemma}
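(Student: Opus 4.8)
The plan is to reduce everything to the sheaf (Čech) condition for a finite affine cover and to exploit the flatness of $\mc F(X)$ as a module over $R := \OX(X)$. Write $X = \Spec R$ and $M := \mc F(X)$; since $X$ is affine, $\mc F = \tilde M$, and since $\mc F$ is flat, $M$ is a flat $R$-module. Because $U$ is quasicompact, I can choose finitely many elements $f_1, \dots, f_n \in R$ with $U = \bigcup_{i=1}^n D(f_i)$. The finiteness of this cover is exactly where quasicompactness enters, and it is what makes the argument work.

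For any quasicoherent sheaf $\mc G$ on $X$, the sheaf axioms for the cover $\{D(f_i)\}$ identify $\mc G(U)$ with the kernel of the Čech differential, giving a left-exact sequence
\[ 0 \to \mc G(U) \to \prod_{i} \mc G(D(f_i)) \to \prod_{i<j} \mc G(D(f_i f_j)), \]
where I use $D(f_i) \cap D(f_j) = D(f_i f_j)$. First I would record this sequence both for $\mc G = \OX$ and for $\mc G = \tilde M$, using the canonical identifications $\OX(D(f_i)) = R_{f_i}$, $\tilde M(D(f_i)) = M_{f_i} \cong M \otimes_R R_{f_i}$, and likewise over the double intersections.

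The key step is then to tensor the structure-sheaf sequence with $M$ over $R$. Since $M$ is flat, $M \otimes_R -$ is exact and in particular preserves the kernel above; and since the cover is finite, the tensor product commutes with the two (finite) products appearing. Under the isomorphisms $M \otimes_R R_{f_i} \cong M_{f_i}$ and $M \otimes_R R_{f_i f_j} \cong M_{f_i f_j}$, the tensored sequence is then identified termwise with the Čech sequence for $\tilde M$. Comparing kernels yields an isomorphism $M \otimes_R \OX(U) \xrightarrow{\sim} \tilde M(U) = \mc F(U)$, and chasing the definitions shows it is precisely the map induced by tensoring the restriction $\mc F(X) \to \mc F(U)$ with $\OX(U)$, as claimed.

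The only real obstacle is bookkeeping rather than conceptual: I must check that the identifications of the Čech terms are natural, so that the comparison of the two sequences is a genuine morphism of complexes and the induced map on kernels is the stated canonical one. I must also be sure that tensoring genuinely commutes with the finite products here; this is the single point where quasicompactness of $U$ is indispensable, since for an infinite cover the tensor product need not commute with the products and the argument would break down.
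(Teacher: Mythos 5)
Your proof is correct, but it takes a genuinely different route from the paper's. You realize $\mc F(U)$ and $\OX(U)$ as kernels of finite \v Cech-type sequences attached to a finite cover $U = \bigcup_{i=1}^n D(f_i)$ (this is where you use quasicompactness), then tensor the structure-sheaf sequence with the flat module $M = \mc F(X)$: flatness guarantees that $M \otimes_R -$ preserves the left-exact sequence, finiteness of the cover lets the tensor product pass through the products, and the natural identifications $M \otimes_R R_{f_i} \cong M_{f_i}$ and $M \otimes_R R_{f_if_j} \cong M_{f_if_j}$ match the tensored sequence termwise with the \v Cech sequence of $\mc F$, giving the isomorphism on kernels; your bookkeeping concerns (naturality of these identifications, and that the resulting map on kernels is the canonical one $m \otimes s \mapsto \res(m)\cdot s$) are routine and check out. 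The paper argues instead by a colimit argument: the statement is obvious for finite rank free modules, every flat module is a filtered colimit of such by the Govorov--Lazard theorem \cite[058G]{Stacks}, and both the tensor product and the functor of sections over $U$ commute with direct limits, the latter because $U$ is quasicompact and (automatically, being open in an affine scheme) quasiseparated \cite[009F]{Stacks}. Both proofs use quasicompactness essentially but at different points: yours to obtain a finite cover so that tensoring commutes with the \v Cech products, the paper's to ensure that sections commute with filtered colimits. Your argument is more self-contained, needing only the sheaf axiom and elementary properties of flat modules; the paper's is shorter modulo the two cited results and illustrates a reusable pattern, namely that any property holding for finite free modules and preserved under filtered colimits holds for all flat modules.
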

\begin{proof}
We may assume that $X = \Spec R$. The assertion is clearly true for the structure sheaf and all its finite direct sums (i.e.\ all finite rank free $R$-modules). Since $U$ is a quasicompact open subset of an affine scheme, it is also quasiseparated and the functor of sections over $U$ commutes with direct limits \cite[009F]{Stacks}. By the Govorov-Lazard Theorem \cite[058G]{Stacks}, any flat $R$-module is the direct limit of finite rank free modules, and since tensor product commutes with colimits, the desired property holds for all flat modules.
\end{proof}

\begin{thm}\label{no-flat-quotient}
Let $X$ be a quasicompact quasiseparated scheme. Then $X$ is semiseparated if and only if for each quasicoherent sheaf on $X$ is a quotient of a flat quasicoherent sheaf.
\end{thm}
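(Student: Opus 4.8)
The forward implication is exactly Murfet's theorem (with the short proof attributed to Neeman) recalled in the introduction, so the plan is to concentrate on the converse, which I would argue by contraposition: assuming $X$ is not semiseparated, I will exhibit a quasicoherent sheaf that is not a quotient of any flat one.

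First I would fix the data witnessing non-semiseparatedness, namely affine opens $U = \Spec R$ and $V = \Spec S$ whose intersection $W = U \cap V$ is not affine. Since $X$ is quasiseparated, $W$ is quasicompact, so Lemma~\ref{flat-lemma} applies on \emph{both} charts: for any flat quasicoherent $\mc F$ the restriction maps induce isomorphisms $\mc F(W) \cong \mc F(U)\otimes_R \OX(W)$ and $\mc F(W)\cong \mc F(V)\otimes_S \OX(W)$. This base-change rigidity is the engine of the proof. I would also record its higher analogue, that $\Hh^i(W, \mc F|_W)\cong \Hh^i(W,\OX|_W)\otimes_R \mc F(U)$ for flat $\mc F$, obtained by writing a flat module as a filtered colimit of finite free ones via the Govorov--Lazard theorem and using that cohomology over the quasicompact quasiseparated $W$ commutes with filtered colimits.

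The opposing force is the non-affineness of $W$: by Serre's cohomological criterion the functor $\Gamma(W,-)$ on $\QCoh(W)$ is not exact, equivalently some quasicoherent sheaf on $W$ has nonvanishing first cohomology. The heart of the argument, and the step I expect to be hardest, is to manufacture a quasicoherent sheaf $\mc M$ on $X$ whose gluing across $U$ and $V$ encodes such a class, and to show that no flat surjection onto $\mc M$ can exist. The mechanism I would aim for is this: a flat cover $\mc F \twoheadrightarrow \mc M$ is surjective on the affine charts, yielding $R$- and $S$-module surjections onto $\mc M(U)$ and $\mc M(V)$; the rigidity above then forces the two base changes $\mc F(U)\otimes_R\OX(W)$ and $\mc F(V)\otimes_S\OX(W)$ to be canonically identified, compatibly with the gluing isomorphism of $\mc M$ over $W$. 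By designing the gluing datum of $\mc M$ so that this identification would entail a surjectivity on $W$-sections, or the splitting of the chosen cohomology class, that $W$ simply does not possess, I obtain a contradiction.

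The main obstacle is precisely this construction. The gluing of a quasicoherent sheaf proceeds through \emph{isomorphisms} over $W$, which is quite restrictive, so the incompatibility cannot be read off a single chart or from $W$ in isolation; indeed $W$, being an open subscheme of the affine (hence separated) scheme $U$, is itself semiseparated, so any sheaf pushed forward from $W$ is harmlessly flat-coverable. The defect must therefore be located genuinely in the interaction of the two affine structures $R$ and $S$ through $\OX(W)$, and making that interaction provably unreconcilable with flatness is the delicate point. Once $\mc M$ is produced on $Y = U\cup V$, a final routine step transfers the conclusion to $X$: applying $\iota_{Y,*}$ and restricting back, a hypothetical flat cover of $\iota_{Y,*}\mc M$ on $X$ would restrict to a flat cover of $\mc M$ on $Y$, which we have excluded.
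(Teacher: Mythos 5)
You have set up the right frame---the forward direction is quoted from the literature, and for the converse you isolate the correct tension: Lemma~\ref{flat-lemma} rigidifies flat sheaves over $W$, while $W$ itself is not affine. But there is a genuine gap, which you in fact concede yourself: the counterexample sheaf $\mc M$ is never constructed. Everything from ``the mechanism I would aim for'' onward is a list of properties a counterexample ought to have, followed by the admission that producing one is ``the delicate point''. Since this construction is the entire mathematical content of the implication, the proposal is an outline of a strategy, not a proof.

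For comparison, the paper's construction is considerably more elementary than what you are aiming at, and your heuristic actively points away from it. Since $W$ is quasicompact, write $W = U_{f_1} \cup \cdots \cup U_{f_n}$ with $f_i \in \OX(U)$, put $I = (f_1, \dots, f_n) \subseteq \OX(U)$ and $\mc I = \iota_{U,*}(\tilde I)$: a pushforward from the \emph{single} chart $U$, with no gluing datum to design and no cohomology class encoded. Two observations finish the proof: (a) $\mc I(W) = \OX(W)$ by the sheaf axiom, because $\tilde I$ restricts to the full structure sheaf on each $U_{f_i}$ (as $f_i$ is invertible there); and (b) by \cite[Chapter II, Exercise 2.17(b)]{H} the restrictions $f_i|_W$ do \emph{not} generate the unit ideal of $\OX(W)$ (otherwise $W$, covered by the affine sets $U_{f_i}$, would be affine), so $I\OX(W) \subsetneq \OX(W)$. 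Now given a flat epimorphism $\mc F \to \mc I$, surjectivity of sections over the affine $V$ together with $\mc I(V) = \mc I(W)$ forces $\mc F(W) \to \mc I(W)$ to be onto; composing with the isomorphism $\mc F(U) \otimes_{\OX(U)} \OX(W) \cong \mc F(W)$ from Lemma~\ref{flat-lemma} yields a surjection $\mc F(U) \otimes_{\OX(U)} \OX(W) \to \mc I(W)$ which, by commutativity, factors through $I\OX(W)$---contradicting (b). Note also two inaccuracies in your discussion: the claim that any sheaf pushed forward from $W$ is ``harmlessly flat-coverable'' is unjustified (pushforward is only left exact and does not preserve flatness), and the conclusion you draw from it---that the defect must reside in a genuine two-chart gluing---is refuted by the sheaf $\mc I$ above, where the second chart $V$ enters only at the last step, to guarantee surjectivity of sections over $W$. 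Your cohomological variant (encoding a class in $\Hh^1$ via Serre's criterion) is closer in spirit to the paper's second, independent proof of this theorem via Theorem~\ref{hom-not-exact}, but there too the argument rests on an explicit pushforward construction rather than on abstract properties of a hoped-for gluing.
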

\begin{proof}
If $X$ is semiseparated, then the assertion holds by the results mentioned in the introduction.

If $X$ is not semiseparated, let $U$, $V$ be two open affine subsets of $X$ such that the intersection $W = U \cap V$ is \emph{not} affine. Since $X$ is quasiseparated, $W$ is quasicompact; therefore, there are sections $f_1, \dots, f_n \in \OX(U)$ such that $W = U_{f_1} \cup \cdots \cup U_{f_n}$, where $U_f$ denotes the distinguished open subset of the affine subscheme $U$ where $f$ does not vanish. Denote by $I$ the ideal of $\OX(U)$ generated by $f_1, \dots, f_n$ and $\mc I = \iota_{U,*}(\tilde I)$ the direct image of $\tilde I$ with respect to the inclusion $\iota_U$.

Since $\mc I(U_{f_i}) = \OX(U_{f_i})$ for each $i = 1, \dots, n$, the sheaf axiom implies that $\mc I(W) = \OX(W)$.  On the other hand, by \cite[Chapter II, Exercise 2.17(b)]{H}, the restrictions of $f_1, \dots, f_n$ to $W$ do not generate the unit ideal of the ring $\OX(W)$.

Assume that there is a flat quasicoherent sheaf $\mc F$ and an epimorphism $f \colon \mc F \to \mc I$.
We have a commutative square
\[ \begin{tikzcd}
\mc F(V) \ar[r,"\res^{\mc F}_{WV}"] \ar[d,twoheadrightarrow,"f(V)"] & \mc F(W) \ar[d,twoheadrightarrow,"f(W)"] & \mc F(U) \ar[l,"\res^{\mc F}_{WU}"'] \ar[d,twoheadrightarrow,"f(U)"] \\
\mc I(V) \ar[r,equal] & \mc I(W) & \mc I(U) \ar[l,"\res^{\mc I}_{WU}"']
\end{tikzcd} \]
with the outer vertical arrows being epimorphisms due to $U$, $V$ being affine and the middle arrow due to the commutativity of the left-hand square.

Let us apply the functor $- \otimes_{\OX(U)} \OX(W)$ (shown simply as $- \otimes \OX(W)$ in the diagram) to the right-hand square, obtaining
\[ \begin{tikzcd}[column sep=7.2em]
\mc F(W) \ar[d,twoheadrightarrow,"f(W)"] & \mc F(U)\otimes \OX(W) \ar[l,"\res^{\mc F}_{WU}\otimes \OX(W)"'] \ar[d,twoheadrightarrow,"f(U)\otimes \OX(W)"] \\
\mc I(W) & \mc I(U)\otimes \OX(W) \ar[l,"\res^{\mc I}_{WU}\otimes \OX(W)"']
\end{tikzcd} \]
Since $\mc F$ is flat, by Lemma \ref{flat-lemma}, the top arrow is an isomorphism. Consequently, the composition gives an epimorphism $\mc F(U)\otimes_{\OX(U)} \OX(W) \to \mc I(W)$. However, the bottom arrow $\res^{\mc I}_{WU}\otimes_{\OX(U)} \OX(W)$ is not an epimorphism: As $\mc I(U) = I$, this map factors as
\[ I \otimes_{\OX(U)} \OX(W) \longrightarrow I \OX(W) \longrightarrow \mc I(W), \]
and as observed above, the image is a proper submodule of $\mc I(W) = \OX(W)$.
This is a contradiction and, hence, the quasicoherent sheaf $\mc I$ cannot be a quotient of a flat quasicoherent sheaf.
\end{proof}

\begin{exmpl} \label{no-flat-double-origin}
Let $k$ be any field.
A handy (and possibly the easiest) example of a non-semiseparated scheme $X$ is the \emph{plane with double origin}, obtained by gluing two copies of $\Spec k[x, y]$ along the \emph{punctured plane}, i.e.\ the non-affine open subset containing everything except the maximal ideal $(x, y)$. It may be illuminating to trace the proof of Theorem \ref{no-flat-quotient} in this particular case.

Let $R = k[x, y]$ for brevity. Then $U$, $V$ are the two copies of $\Spec R$ and $W$ is the punctured plane. Then $I = (x, y)$, as $W$ can be covered by the two distinguished open subsets $U_x$, $U_y$. The resulting sheaf $\mc I$ then satisfies $\mc I(U) = I$, $\mc I(V) = \mc I(W) = R$. Since, by Lemma \ref{flat-lemma}, for any flat sheaf on $X$, both restrictions from $U$ and $V$ to $W$ are the identity morphisms, it is easy to see that the image of any map from a flat sheaf is contained in $I$ on all three open sets.
\end{exmpl}

\section{Exactness of the internal Hom}\label{section-internal-hom-exactness}

For any scheme $X$, the category \QCohX\ has a closed symmetric monoidal structure given by the usual sheaf tensor product $\otimes$ together with its right adjoint, which we denote by $\HOMqc$. This bifunctor is just the usual sheaf hom composed with the coherator functor \cite[Proposition 6.15]{M}. In this section we investigate the exactness of the contravariant functor $\HOMqc(-, \mc E)$, where $\mc E$ is an injective object of \QCohX.

If $(\mbc G,\otimes)$ is a general symmetric monoidal category, we call an object $F\in\mbc G$ flat if the functor $F\otimes-\colon \mbc G \to \mbc G$ is exact. This is well-known to be consistent with the previous definition of flatness for $\mbc G = \QCohX$.

We start with a general observation:

\begin{lemma}\label{enriched-injective}
Let $\mbc G$ be a symmetric closed monoidal abelian category with the internal hom denoted\/ $[-,-]$. Let $E$ be an injective object of $\mbc G$ and assume that $\mbc G$ has a flat generator $G$. Then the functor
\[ [-,E] \colon \mbc G^\op \to \mbc G \]
is exact.
\end{lemma}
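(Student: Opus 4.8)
The plan is to separate the contravariant exactness of $[-,E]$ into its two halves and to notice that only one of them carries real content. First I would recall that the internal hom of a closed monoidal category is left exact in each variable: the symmetry isomorphism $\Hom(Z,[A,E])\cong\Hom(Z\otimes A,E)\cong\Hom(A,[Z,E])$, natural in $Z$ and $A$, exhibits $[-,E]\colon\mbc G^\op\to\mbc G$ as converting colimits in $\mbc G$ into limits, hence cokernels into kernels. Thus for a short exact sequence $0\to A\to B\to C\to 0$ the sequence $0\to[C,E]\to[B,E]\xrightarrow{q}[A,E]$ is automatically exact, and the whole statement reduces to the surjectivity of $q$.

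Next I would test this surjectivity after applying $\Hom(G,-)$, where flatness of $G$ and injectivity of $E$ combine perfectly. The tensor--hom adjunction gives a natural isomorphism of contravariant functors $\Hom(G,[-,E])\cong\Hom(G\otimes-,E)$. Since $G$ is flat, $G\otimes-$ is exact and carries the short exact sequence above to a short exact sequence $0\to G\otimes A\to G\otimes B\to G\otimes C\to 0$; since $E$ is injective, $\Hom(-,E)$ turns the latter into a short exact sequence, yielding in particular a surjection $\Hom(G\otimes B,E)\twoheadrightarrow\Hom(G\otimes A,E)$. Transporting along the isomorphism, the map $\Hom(G,q)\colon\Hom(G,[B,E])\to\Hom(G,[A,E])$ is surjective.

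Finally I would upgrade surjectivity of $\Hom(G,q)$ to surjectivity of $q$ itself, using that $G$ generates. Writing $\pi\colon[A,E]\to\operatorname{coker}q$ for the cokernel projection, the identity $\Hom(G,\pi)\circ\Hom(G,q)=\Hom(G,\pi q)=0$ together with the surjectivity of $\Hom(G,q)$ forces $\Hom(G,\pi)=0$. Hence every morphism $g\colon G\to[A,E]$ satisfies $\pi g=0$, i.e.\ factors through $\ker\pi=\operatorname{im}q$; as $G$ is a generator, the images of all such $g$ sum to $[A,E]$, so $\operatorname{im}q=[A,E]$ and $q$ is epic.

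The hard part, and the only genuinely nonformal step, is this last transfer. Were $G$ a \emph{projective} generator, $\Hom(G,-)$ would be exact and the conclusion immediate; but a flat generator need not be projective, and $\Hom(G,-)$ need not reflect epimorphisms. I expect the obstacle to be precisely that one cannot lift a map $G\to\operatorname{coker}q$ along $\pi$, which is why I argue through the cokernel and the generating property of $G$ rather than attempting such a lift.
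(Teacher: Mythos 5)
Your proof is correct and follows essentially the same route as the paper's: left exactness of $[-,E]$ is formal, and the surjectivity of $[B,E] \to [A,E]$ is tested against $\Hom_{\mbc G}(G,-)$ via the tensor--hom adjunction, the flatness of $G$, and the injectivity of $E$. The only difference is presentational: the paper compresses your final step (passing from surjectivity of $\Hom_{\mbc G}(G,q)$ to surjectivity of $q$) into the single remark that $\Hom_{\mbc G}(G,-)$ is exactness-reflecting because $G$ is a generator, which is exactly the cokernel argument you spell out --- indeed, once you have $\Hom_{\mbc G}(G,\pi)=0$, faithfulness of $\Hom_{\mbc G}(G,-)$ gives $\pi=0$ directly, with no need to invoke sums of images.
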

\begin{proof}
The internal hom, being a right adjoint, is always left-exact. Having a monomorphism $A \ito B$ in $\G$, to test that $[B,E] \to [A,E]$ is an epimorphism, we can check it against the exactness-reflecting functor $\Hom_\G(G,-)$,
\[ \Hom_\G(G, [B,E]) \to \Hom_\G(G, [A,E]), \]
which, using the adjunction, is surjective if and only if
\[ \Hom_\G(G \otimes B, E) \to \Hom_\G(G \otimes A, E) \]
is, where $\otimes$ denotes the tensor product in $\mbc G$. Since $G$ is flat, $G \otimes A \to G \otimes B$ is a monomorphism, and injectivity of $E$ implies that the map in question is indeed surjective.
\end{proof}

\begin{exmpl}
Let $\mbc G$ be the category of chain complexes of vector spaces over a field. This is a Grothendieck category where the injective objects are precisely the contractible complexes. However, the internal hom is exact for any arguments, hence there is in general no converse to Lemma \ref{enriched-injective} in the sense that if $[-,E]$ is exact, then $E$ is injective.
\end{exmpl}

\begin{cor}\label{semiseparated-hom-exact}
Let $X$ be a quasicompact and semiseparated scheme. Then for every injective $\mc E \in \QCohX$, the functor
\[ \HOMqc(-,\mc E)\colon \QCohX^\op \to \QCohX \]
is exact.
\end{cor}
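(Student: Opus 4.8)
The plan is to obtain Corollary~\ref{semiseparated-hom-exact} as a direct instance of Lemma~\ref{enriched-injective}, with $\mbc G = \QCohX$. First I would recall the structural facts already in place: $\QCohX$ is a Grothendieck abelian category which, as noted at the start of this section, carries a closed symmetric monoidal structure whose tensor product is the sheaf tensor product $\otimes$ and whose internal hom is $\HOMqc$. Thus $\QCohX$ meets the standing hypotheses of Lemma~\ref{enriched-injective} once we take $[-,-] = \HOMqc(-,-)$, and it remains only to exhibit a flat generator.

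For the flat generator I would invoke the results recalled in the introduction: for $X$ quasicompact and semiseparated, the theorems of Murfet (respectively Neeman and Positselski) guarantee that every quasicoherent sheaf is a quotient of a flat one. Being a Grothendieck category, $\QCohX$ possesses some generator $\mc U$; choosing a flat sheaf $\mc G$ and an epimorphism $\mc G \twoheadrightarrow \mc U$ then yields a flat generator, since any epimorphic image of a generator is again a generator. I would also record that flatness of $\mc G$ in the monoidal sense---exactness of $\mc G \otimes -$---coincides with the sheaf-theoretic flatness of those references, which is precisely the compatibility observed just before Lemma~\ref{enriched-injective}.

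With a flat generator in hand, Lemma~\ref{enriched-injective} applies verbatim to any injective $\mc E \in \QCohX$ and delivers the exactness of $\HOMqc(-,\mc E)\colon \QCohX^\op \to \QCohX$, completing the argument. There is no serious obstacle here, as every step amounts to matching definitions; the only point that deserves a moment's attention is the passage from ``every sheaf is a flat quotient'' to the existence of a \emph{single} flat generator, together with the verification that this generator is flat for the global tensor product $\otimes$ rather than merely flat on affine open subsets.
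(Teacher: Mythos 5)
Your argument takes exactly the paper's route: the paper's entire proof is that $\QCohX$ has a flat generator whenever $X$ is quasicompact and semiseparated (citing the results recalled in the introduction), hence Lemma~\ref{enriched-injective} applies. You additionally spell out the passage from ``every quasicoherent sheaf is a quotient of a flat one'' to ``there exists a single flat generator,'' which the paper itself only asserts as a rephrasing in the introduction; that extra care is appropriate.

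One correction is needed in that step, however. The principle you cite --- ``any epimorphic image of a generator is again a generator'' --- is false as stated: in the category of abelian groups, $\Z$ is a generator and $\Z \twoheadrightarrow \Z/2\Z$ is an epimorphism, yet $\Z/2\Z$ is not a generator. What your construction actually requires is the implication in the other direction, which \emph{is} true: if $f \colon \mc G \twoheadrightarrow \mc U$ is an epimorphism and $\mc U$ is a generator, then $\mc G$ is a generator. Indeed, given a nonzero morphism $h \colon \mc A \to \mc B$, pick $u \colon \mc U \to \mc A$ with $hu \neq 0$; then $huf \neq 0$, since $(hu)f = 0$ would force $hu = 0$ by the right-cancellability of the epimorphism $f$. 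So $\Hom(\mc G, -)$ is faithful. With the justification restated this way, your proof is complete and coincides with the paper's.
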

\begin{proof}
As pointed out in the introduction, the category \QCohX\ has a flat generator whenever $X$ is quasicompact semiseparated, hence Lemma \ref{enriched-injective} applies.
\end{proof}

For the sake of completeness, we also record the following result.

\begin{prop}
Let $X$ be a scheme and $\mc E \in \QCohX$ an injective quasicoherent sheaf such that $\mc E$ is also an injective object of the category \OXMod\ of all sheaves of $\OX$-modules. Then the functor $\HOMqc(-,\mc E)$ is exact on short exact sequences of sheaves of finite type.
\end{prop}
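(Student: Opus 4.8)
The plan is to dispose of left-exactness for free and then reduce the remaining surjectivity to a statement about modules over an affine open. The internal hom is left-exact as (one half of) a tensor--hom adjunction, exactly as in the proof of Lemma~\ref{enriched-injective}, so for a short exact sequence $0 \to \mc A \to \mc B \to \mc C \to 0$ of quasicoherent sheaves of finite type the sequence $0 \to \HOMqc(\mc C, \mc E) \to \HOMqc(\mc B, \mc E) \to \HOMqc(\mc A, \mc E)$ is automatically exact, and only the surjectivity of the last map needs proof. Because $\HOMqc(\mc A, \mc E)$ and $\HOMqc(\mc B, \mc E)$ are quasicoherent, and a morphism of quasicoherent sheaves is an epimorphism precisely when it is surjective on sections over each member of an affine open cover, I would fix an affine open $U = \Spec R$ and prove surjectivity of $\Gamma(U, \HOMqc(\mc B, \mc E)) \to \Gamma(U, \HOMqc(\mc A, \mc E))$.

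First I would record what the hypotheses give locally. Since $(-)|_U$ has the exact left adjoint $\iota_{U,!}$ (extension by zero), it preserves injectivity; hence $\mc E|_U$ is injective in $\OUMod$, and being quasicoherent it is then injective in $\QCoh(U) \simeq \RMod$. Writing $\mc E|_U = \tilde E$, the module $E$ is thus an injective $R$-module. Moreover, on the affine scheme $U$ the internal hom is computed by $\HOMqc_U(\tilde M, \tilde N) \cong \widetilde{\Hom_R(M, N)}$ via the defining adjunction, and the ordinary sheaf hom $\HOM$ always commutes with restriction to opens.

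The crux, and the step I expect to be the main obstacle, is to identify $\HOMqc(\mc F, \mc E)|_U$ with $\HOMqc_U(\mc F|_U, \mc E|_U) = \widetilde{\Hom_R(\mc F(U), E)}$ for a finite type sheaf $\mc F$; equivalently, to show that the coherator occurring in $\HOMqc = Q \circ \HOM$ commutes with restriction to $U$ when applied to $\HOM(\mc F, \mc E)$. This is exactly where the finite type hypothesis is essential: for a merely quasicoherent $\mc F$ the sheaf hom $\HOM(\mc F, \mc E)$ need not be quasicoherent and the coherator genuinely fails to localize, which is consistent with the Main Theorem, where for non-semiseparated $X$ exactness fails for injective objects of $\QCohX$ that are not injective in $\OXMod$. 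For finite type $\mc F$ I would exploit that $\mc F|_U$ is generated by finitely many sections: choosing a surjection $\OU^k \twoheadrightarrow \mc F|_U$ with kernel $\mc K$ and applying the exact functor $\HOM_U(-, \tilde E)$ (exact because $\tilde E$ is injective in $\OUMod$) realizes $\HOMqc_U(\mc F|_U, \mc E|_U)$ as the kernel of a map out of $\tilde E^{\,k}$; the finiteness of the generating set is what lets one invert the canonical comparison map $\HOMqc(\mc F, \mc E)|_U \to \HOMqc_U(\mc F|_U, \mc E|_U)$ by clearing finitely many denominators.

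Granting this identification the conclusion is immediate. Restricting $0 \to \mc A \to \mc B \to \mc C \to 0$ to $U$ and passing to sections gives a short exact sequence $0 \to M_A \to M_B \to M_C \to 0$ of finitely generated $R$-modules; applying $\Hom_R(-, E)$, exact because $E$ is an injective $R$-module, produces a surjection $\Hom_R(M_B, E) \twoheadrightarrow \Hom_R(M_A, E)$, which under the identification above is precisely the sought surjectivity of $\HOMqc(\mc B, \mc E) \to \HOMqc(\mc A, \mc E)$ on sections over $U$. As $U$ ranged over an affine cover, this finishes the argument. I note that the hypothesis that $\mc E$ is injective in $\OXMod$, rather than merely in $\QCohX$, is used twice: to make $E$ an injective module and to keep $\HOM_U(-, \mc E|_U)$ exact in the local analysis.
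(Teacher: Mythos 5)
Your reduction is sound as far as it goes: left exactness is free, an epimorphism of quasicoherent sheaves can be tested on sections over an affine cover, and restriction along an open immersion preserves injectivity (it has the exact left adjoint given by extension by zero), so $E=\mc E(U)$ is indeed an injective $R$-module. The genuine gap is at the step you yourself call the crux, and the justification you offer there cannot work. ``Clearing finitely many denominators'' from a finite generating set is the classical argument showing that, for $M$ of finite type, the comparison map $\Hom_R(M,E)_f\to\Hom_{R_f}(M_f,E_f)$ is \emph{injective} (and bijective when $M$ is finitely \emph{presented}); it does not give surjectivity for merely finite type $M$, because defining a preimage on finitely many generators need not be well defined: the module of relations among those generators may be infinitely generated, so no single power of $f$ clears all relations. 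This failure is real, not expository: take $R=k[f,a_1,a_2,\dots]$, $M=R/(a_1,a_2,\dots)$ and $N=R/(f^ia_i: i\ge 1)$; then $M$ is cyclic, yet $\Hom_R(M,N)_f\to\Hom_{R_f}(M_f,N_f)$ is not onto (the map $M_f\to N_f$ induced by $1\mapsto 1$ does not come from $\Hom_R(M,N)_f$). Hence any correct proof must use the hypothesis that $\mc E$ is injective in \OXMod\ exactly in the surjectivity half of the comparison, and your sketch uses it only to present the internal hom over $U$ as a kernel of a map out of $\tilde E^{\,k}$ --- a step that never feeds into the comparison map; note also that the matching presentation of $\HOMqc(\mc F,\mc E)|_U$ is unavailable, since a finite type sheaf on a general $X$ admits no global surjection from $\OX^k$, and the coherator of an arbitrary scheme cannot be manipulated directly.

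The way to complete your local strategy is to prove that the plain sheaf hom $\HOM(\mc F,\mc E)$ is already quasicoherent; then the coherator does nothing, restriction commutes trivially, and your final paragraph finishes the argument. Quasicoherence is local, and over $U=\Spec R$ the needed surjectivity of $\Hom_R(M,E)\to\Hom_{R_f}(M_f,E_f)$ follows by applying the exact functor $\Hom_{\OUMod}(-,\mc E|_U)$ to the short exact sequence $0\to \iota_!\bigl(\tilde M|_{D(f)}\bigr)\to\tilde M\to\mc Q\to 0$ in \OUMod\ (with $\iota_!$ the extension by zero and $\mc Q$ the cokernel), using $\Hom_{\OUMod}\bigl(\iota_!(\tilde M|_{D(f)}),\tilde E\bigr)\cong\Hom_{R_f}(M_f,E_f)$ by adjunction; injectivity of the localized comparison is your denominator argument, and that is the only place where finite type enters. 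This quasicoherence statement is precisely what the paper cites \cite[01LA]{Stacks} for. The paper's own proof is then global rather than local: \OXMod\ has a family of flat generators, namely the extensions by zero of the $\OX|_V$ \cite[05NI]{Stacks}, so Lemma~\ref{enriched-injective} applied in \OXMod\ shows that $\HOM(-,\mc E)$ is exact on all of \OXMod, and since $\HOM$ and $\HOMqc$ agree on the given finite type sequence by the quasicoherence above, the proposition follows at once. In short, your outline is completable, but the assertion you flagged as the main obstacle is the entire mathematical content of the proposition, and the argument you propose for it is the one that provably cannot suffice.
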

\begin{proof}
By \cite[05NI]{Stacks}, the category \OXMod\ has a family of flat generators formed by the extensions by zero of the restrictions $\OX|_U$, where $U \subseteq X$ is an open set. Therefore, by the assumption on $\mc E$ and Lemma \ref{enriched-injective}, the usual sheaf hom $\HOM(-,\mc E)$ is exact on \OXMod. Finally, by \cite[01LA]{Stacks}, $\HOM(\mc A, \mc E)$ is quasicoherent for every $\mc A$ of finite type, hence it coincides with $\HOMqc(\mc A, \mc E)$ and we are done.
\end{proof}

Note that the assumption of $\mc E$ being injective in \OXMod\ is satisfied e.g.\ whenever $X$ is locally Noetherian, \cite[\S II.7]{H2}. This shows that to produce a counterexample to Corollary \ref{semiseparated-hom-exact} with $X$ locally Noetherian non-semiseparated, one has to work with sheaves not of finite type.

Furthermore, in the locally Noetherian case, the proof shows that the sheaf hom into an injective sheaf is exact, so it is the coherator that is ``responsible'' for the failure of exactness in general.

Next we will need the following enriched version of the adjunction between the restriction to an open set and the direct image functor.

\begin{lemma}\label{hom-to-direct-image}
Let $X$ be a quasicompact and quasiseparated scheme and $U \subseteq X$ an open quasicompact subset. Then, for every $\mc M \in \QCohX$ and $\mc N \in \QCohU$ we have a natural isomorphism
\[ 
\HOMqc(\mc M, \iota_{U,*}(\mc N))
\cong \iota_{U,*}\bigl(\HOMqc(\mc M|_U, \mc N)\bigr).
\]
If $U$ is affine, this is further isomorphic to
\[ \iota_{U,*}\bigl(\Hom_{\QCohU}(\mc M|_U, \mc N)^\sim\bigr)
\quad \text{and} \quad
\iota_{U,*}\bigl(\Hom_{\OX(U)}(\mc M(U), \mc N(U))^\sim\bigr).
\]
\end{lemma}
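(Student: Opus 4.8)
The plan is to establish the first isomorphism by a Yoneda argument, exhibiting both sides as representing the same functor on \QCohX. I would fix an arbitrary test object $\mc A \in \QCohX$, apply $\Hom_{\QCohX}(\mc A, -)$ to each side, and then chain together the tensor–hom adjunction in \QCohX\ (the defining property of $\HOMqc$), the adjunction $(-)|_U \dashv \iota_{U,*}$ between restriction and direct image, the strong monoidality of restriction, the tensor–hom adjunction in \QCohU, and finally the restriction–direct image adjunction once more:
\begin{align*}
\Hom_{\QCohX}(\mc A, \HOMqc(\mc M, \iota_{U,*}\mc N))
&\cong \Hom_{\QCohX}(\mc A \otimes \mc M, \iota_{U,*}\mc N) \\
&\cong \Hom_{\QCohU}((\mc A \otimes \mc M)|_U, \mc N) \\
&\cong \Hom_{\QCohU}(\mc A|_U \otimes \mc M|_U, \mc N) \\
&\cong \Hom_{\QCohU}(\mc A|_U, \HOMqc(\mc M|_U, \mc N)) \\
&\cong \Hom_{\QCohX}(\mc A, \iota_{U,*}\HOMqc(\mc M|_U, \mc N)).
\end{align*}
Since every isomorphism in this chain is natural in $\mc A$, the Yoneda lemma yields the asserted natural isomorphism. (Here $\iota_{U,*}$ lands in \QCohX\ because $\iota_U$ is quasicompact and quasiseparated, as recorded in the Notation.)

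For the affine refinement, I would write $R = \OX(U)$ and invoke the equivalence $\QCohU \simeq \RMod$, under which $\mc M|_U$ corresponds to $M := \mc M(U)$ and $\mc N$ to $N := \mc N(U)$. This equivalence carries the closed monoidal structure $(\otimes, \HOMqc)$ on \QCohU\ to $(\otimes_R, \Hom_R)$ on \RMod, so that $\HOMqc(\mc M|_U, \mc N) \cong \Hom_R(M, N)^\sim = \Hom_{\OX(U)}(\mc M(U), \mc N(U))^\sim$; applying $\iota_{U,*}$ produces the third of the three expressions. The middle expression is then the same object described differently: under the equivalence the $R$-module $\Hom_R(M, N)$ is identified with $\Hom_{\QCohU}(\mc M|_U, \mc N)$, whence $\Hom_{\QCohU}(\mc M|_U, \mc N)^\sim = \Hom_{\OX(U)}(\mc M(U), \mc N(U))^\sim$, and both coincide with $\HOMqc(\mc M|_U, \mc N)$.

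I expect the genuine content to lie in assembling the correct adjunctions rather than in any single difficult computation, and the main obstacle to be ensuring that each isomorphism is taken \emph{inside} the categories of quasicoherent sheaves rather than in the larger categories of $\OX$-modules. Concretely, one must verify that the tensor–hom adjunction holds in \QCohX\ even though $\HOMqc$ is the ordinary sheaf hom composed with the coherator; this is automatic, since $\HOMqc$ is \emph{defined} as the right adjoint of $\otimes$ on \QCohX. One must also confirm that restriction to $U$ commutes with the quasicoherent tensor product, which holds because the open-immersion pullback $\iota_U^*$ is strong monoidal. Once these two points are in place, the naturality bookkeeping required to apply Yoneda is routine.
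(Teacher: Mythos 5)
Your proposal is correct and is essentially the paper's own argument: the paper observes that $\mc M|_U \otimes (-)|_U \cong (\mc M \otimes -)|_U$ as composites of left adjoints and concludes that the composites of the corresponding right adjoints agree, which is exactly the uniqueness-of-adjoints statement your Yoneda chain proves explicitly, and the affine refinement is likewise handled in both cases by the equivalence $\QCohU \simeq \OX(U)\text{-Mod}$. Your version just unfolds the same adjunctions ($-\otimes\mc M \dashv \HOMqc(\mc M,-)$, $(-)|_U \dashv \iota_{U,*}$, and strong monoidality of restriction) at the level of hom-sets.
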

\begin{proof}
Since restriction to $U$ commutes with the tensor product, the following two functors $\QCohX \to \QCohU$ are naturally isomorphic:
\[ \mc M|_U \otimes (-)|_U \cong (\mc M \otimes -)|_U. \]
Both functors are compositions of left adjoints---the restriction and the tensor product. Hence composing the corresponing right adjoints produces the isomorphism from the statement.

The further isomorphisms in case $U$ is affine are direct consequences of the fact that quasicoherent sheaves over $U$ are determined by their modules of global sections.
\end{proof}

Let us recall further relevant definitions, which we are going to use: A subcategory of a Grothendieck category is called \emph{Giraud subcategory} if the inclusion functor has an exact left adjoint (cf.\ \cite[\S X.1]{Sten}). A subcategory of a locally finitely presented category with products (in the sense of~\cite{CB}) is \emph{definable} provided it is closed under direct products, direct limits, and pure subobjects.

\begin{lemma}\label{giraud-definable}
Let $R$ be a commutative ring, $X = \Spec R$ and $U$ a quasicompact open subset of $X$. Then the $R$-modules of the form $\tilde M(U)$, where $M \in \RMod$, form a Giraud, definable subcategory of $\RMod$, which we denote $\GU$. The inclusion functor $i\colon \GU \ito \RMod$ is exact if and only if $U$ is affine.
\end{lemma}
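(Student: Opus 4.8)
The plan is to realize $\GU$ as the essential image of the direct image $\iota_{U,*}$ under the global sections equivalence and to read all three assertions off the localization theory of $\iota_U^*$. Write $\iota_U^*$ for the (exact) restriction functor $(-)|_U\colon\QCohX\to\QCohU$; its right adjoint $\iota_{U,*}$ is fully faithful, since the counit $\iota_U^*\iota_{U,*}\to\mathrm{id}$ is an isomorphism, so $\iota_U^*$ is a localization of Grothendieck categories. Because $X$ is affine, the global sections functor $\mc M\mapsto\mc M(X)$ is an exact equivalence $\QCohX\simeq\RMod$, under which I claim the essential image of $\iota_{U,*}$ is exactly $\GU$: indeed $\iota_{U,*}(\mc N)(X)=\mc N(U)$, and every $\mc N\in\QCohU$ has the form $\tilde M|_U$ (take $M=\iota_{U,*}(\mc N)(X)$), so these modules are precisely the $\tilde M(U)$. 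By the theory of Giraud subcategories \cite[\S X.1]{Sten}, the essential image of the fully faithful right adjoint of an exact localization is a Giraud subcategory with the localization as its reflector; transporting along the equivalence $\iota_{U,*}\colon\QCohU\xrightarrow{\sim}\GU$, the inclusion $i$ corresponds to $\iota_{U,*}$ and the automatically exact reflector to $\iota_U^*$. This proves that $\GU$ is a Giraud subcategory.

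This correspondence also settles the exactness criterion: $i$ is exact if and only if $\iota_{U,*}\colon\QCohU\to\QCohX$ is. As $\mc M\mapsto\mc M(X)$ is an exact equivalence and its composite with $\iota_{U,*}$ is the section functor $\mc N\mapsto\mc N(U)$, this is in turn equivalent to exactness of $\mc N\mapsto\mc N(U)\colon\QCohU\to\RMod$. The right derived functors of the latter are the cohomology groups $\Hh^i(U,\mc N)$, so it is exact precisely when $\Hh^1(U,\mc N)=0$ for every $\mc N\in\QCohU$; since $U$ is quasicompact (and quasiseparated, being open in an affine scheme), Serre's criterion for affineness \cite[01XF]{Stacks} shows that this holds if and only if $U$ is affine.

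For definability I would check the three closure properties in $\RMod$. From the description above, $N\in\GU$ exactly when the unit $\tilde N\to\iota_{U,*}\iota_U^*\tilde N$ is an isomorphism, equivalently when the restriction map $\rho_N\colon N\to\tilde N(U)$ is an isomorphism. Closure under products is then formal: a Giraud subcategory is reflective, so $i$ is a right adjoint and preserves products, whence the product in $\RMod$ of a family from $\GU$ is the image under $i$ of their product in $\GU$ and so lies in $\GU$. For the remaining two properties I would use an explicit model for $\rho_N$. Choosing $U=D(f_1)\cup\dots\cup D(f_n)$, the map $\rho_N$ is the augmentation of $N\otimes_R K^\bullet$, where $K^\bullet$ is the flat complex $0\to R\to\bigoplus_i R_{f_i}\to\bigoplus_{i<j}R_{f_if_j}$ with $R$ in degree $0$; hence $N\in\GU$ if and only if $N\otimes_R K^\bullet$ is exact in its first two spots, i.e.\ $\Hh^0(N\otimes_R K^\bullet)=\Hh^1(N\otimes_R K^\bullet)=0$. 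Since $-\otimes_R K^\bullet$ and homology both commute with direct limits (filtered colimits being exact in $\RMod$), these two groups vanish for a direct limit of objects of $\GU$, giving closure under direct limits.

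The one genuinely delicate point is closure under pure submodules, which I expect to be the main obstacle. Let $N'\subseteq N$ be pure with $N\in\GU$. As each term of $K^\bullet$ is flat, tensoring $0\to N'\to N\to N''\to 0$ gives a short exact sequence of complexes, and its long exact homology sequence, using $\Hh^0(N\otimes_R K^\bullet)=\Hh^1(N\otimes_R K^\bullet)=0$, yields $\Hh^0(N'\otimes_R K^\bullet)=0$ together with an isomorphism $\Hh^1(N'\otimes_R K^\bullet)\cong\Hh^0(N''\otimes_R K^\bullet)=\ker\bigl(N''\to\bigoplus_i N''_{f_i}\bigr)$, the latter being the $J$-power-torsion of $N''$ for $J=(f_1,\dots,f_n)$. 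It therefore remains to show this torsion vanishes, and here I would invoke purity directly: given $x\in N''$ with $J^k x=0$, lift $x$ to $n\in N$, so that $J^k n\subseteq N'$; writing $g_1,\dots,g_m$ for generators of $J^k$, the system $g_l\xi=g_l n$ has the solution $\xi=n$ in $N$ with all right-hand sides $g_l n$ in $N'$, so purity supplies a solution $\xi'\in N'$. Then $g_l(n-\xi')=0$ for all $l$, hence $J^k(n-\xi')=0$; but $N\in\GU$ forces $\ker\rho_N$, the $J$-power-torsion of $N$, to vanish, so $n=\xi'\in N'$ and $x=0$. Thus $\Hh^1(N'\otimes_R K^\bullet)=0$, so $N'\in\GU$, completing the proof that $\GU$ is definable.
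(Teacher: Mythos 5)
Your proof is correct. For the Giraud assertion and the exactness criterion it essentially coincides with the paper's argument: both exploit the adjunction $(-)|_U \dashv \iota_{U,*}$ with exact left adjoint and fully faithful right adjoint, transport it along the global-sections equivalences, and settle exactness of $i$ by Serre's criterion \cite[01XF]{Stacks}; the only cosmetic difference is that you re-derive the identification $\QCohU \simeq \GU$ by hand where the paper cites \cite[0EHM]{Stacks}. The definability part, however, is genuinely different. The paper stays at the sheaf level: it shows that the essential image of $\iota_{U,*}$ is a definable subcategory of \QCohX, using that $\iota_{U,*}$ preserves products (being a right adjoint) and direct limits \cite[Lemma B.6]{TT}, and deducing closure under pure subobjects from \cite[Lemma 2.12]{PS} together with the comparison of categorical and geometric purity \cite[Lemma 1.4(2)]{PS}. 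You instead work entirely inside \RMod: you characterize $N \in \GU$ by the vanishing of $\Hh^0$ and $\Hh^1$ of $N \otimes_R K^\bullet$ for the augmented \v{C}ech complex $K^\bullet$ (equivalently, $N$ has no $J$-power torsion and the restriction map $\rho_N$ is onto), obtain closure under direct limits from exactness of filtered colimits, and handle pure submodules via the equational characterization of purity, identifying the obstruction $\Hh^0(N''\otimes_R K^\bullet)$ with the $J$-power torsion of $N''$ and killing it by solving the system $g_l\xi = g_l n$ inside $N'$. This last step is the real content and your argument for it is sound. What each approach buys: yours is elementary and self-contained, needing no input from \cite{PS} or \cite{TT} and making the role of the ideal $J=(f_1,\dots,f_n)$ completely explicit; the paper's is shorter and records along the way the fact, of independent interest and in the spirit of \cite[Remark 4.6]{PS}, that the image of $\iota_{U,*}$ is definable in \QCohX\ itself, which places the lemma within the general framework of purity for sheaf categories.
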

\begin{proof}
We have the following diagram of categories and functors:
\[ \begin{tikzcd}[column sep=large,row sep=large]
\QCohU \arrow[r, hook, "\iota_{U,*}"', bend right=10] \arrow[r, phantom, "\perp"] \arrow[d,leftrightarrow,"\cong"'] & \QCohX \arrow[d,leftrightarrow,"\cong"] \arrow [l, "(-)|_U"', bend right=10] \\
\GU \arrow[r, hook, "i"', bend right=10] \arrow[r, phantom, "\perp"] & \RMod \arrow [l, "\eta"', bend right=10, dashed]
\end{tikzcd} \]
The left-hand vertical equivalence follows from \cite[0EHM]{Stacks}, utilizing that $U$ is quasiaffine, and the right-hand one is the standard one; in both cases the passage from sheaves to modules is just taking the global sections. The (fully faithful) direct image functor $\iota_{U,*}$ identifies \QCohU\ with a full subcategory of \QCohX\ with the restriction to $U$ being the exact left adjoint. We define $\eta$ to be the composition of the sheaf restriction with the two vertical equivalences, hence $\eta$ is the exact left adjoint to the inclusion $i$. This shows that $\GU$ is a Giraud subcategory of \RMod.

Similarly, to show that $\GU$ is definable, we need to show that the essential image of the functor $\iota_{U,*}$ is a definable subcategory of \QCohX. Basically, one has to observe that \cite[Remark 4.6]{PS} generalizes to (possibly non-affine) quasicompact open subsets of $X$: As a right adjoint, $\iota_{U,*}$ commutes with direct products, and by \cite[Lemma B.6]{TT}, it commutes with direct limits. The closedness of the essential image under pure subobjects follows from \cite[Lemma 2.12]{PS} and the fact that by \cite[Lemma 1.4(2)]{PS}, categorical pure-exactness in \QCohX\ is inherited from the larger category of all sheaves of \OX-modules.

For the final claim, note that if $U$ is affine, then since $X$ is semiseparated, the functor $\iota_{U,*}$ is exact, which via the vertical equivalences implies the exactness of $i$. On the other hand, if $U$ is not affine, then by Serre's criterion \cite[01XF]{Stacks}, the sections over $U$, i.e.\ the composition of the left-hand equivalence with $i$, is not an exact functor $\QCohU \to \RMod$, therefore $i$ is not exact.
\end{proof}

\begin{thm}\label{hom-not-exact}
Let $X$ be a quasicompact quasiseparated scheme. Then $X$ is semiseparated if and only if for each $\mc E \in \QCohX$ injective, the contravariant functor $\HOMqc(-,\mc E)$ is exact.
\end{thm}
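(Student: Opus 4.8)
The plan is to prove the two directions of the equivalence separately. One direction is already essentially free: if $X$ is semiseparated, then Corollary~\ref{semiseparated-hom-exact} immediately gives that $\HOMqc(-,\mc E)$ is exact for every injective $\mc E$, so I would dispatch this implication in a single sentence.

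For the substantive direction, I would prove the contrapositive: if $X$ is \emph{not} semiseparated, exhibit an injective $\mc E$ for which $\HOMqc(-,\mc E)$ fails to be exact. The natural strategy is to reuse the geometric setup from Theorem~\ref{no-flat-quotient}: pick open affines $U,V\subseteq X$ with $W=U\cap V$ not affine, noting $W$ is quasicompact by quasiseparatedness. The idea is to transport the failure of exactness of the sections functor over $W$ (equivalently, the non-exactness of the inclusion $i\colon\GU\ito\RMod$ from Lemma~\ref{giraud-definable}) into a failure of exactness of an internal hom. Concretely, I would choose a short exact sequence in $\QCohU$ whose image under $\iota_{U,*}$ is no longer exact, or more precisely whose image fails to stay exact after applying the relevant hom. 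The engine is Lemma~\ref{hom-to-direct-image}: for $U$ affine, $\HOMqc(\mc M,\iota_{U,*}(\mc N))\cong\iota_{U,*}\bigl(\Hom_{\OX(U)}(\mc M(U),\mc N(U))^\sim\bigr)$, which reduces the computation of the internal hom to ordinary $\Hom$ of $\OX(U)$-modules.

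The concrete construction I would attempt: take $R=\OX(U)$, let $E$ be an injective cogenerator of $\RMod$, and set $\mc E=\iota_{U,*}(\tilde E)$, which should be injective in $\QCohX$ (injectivity of $\tilde E$ in $\QCohU$ is clear, and $\iota_{U,*}$ has the exact left adjoint $(-)|_U$, so it preserves injectives). By Lemma~\ref{hom-to-direct-image}, for $\mc M\in\QCohX$ we have $\HOMqc(\mc M,\mc E)\cong\iota_{U,*}\bigl(\Hom_R(\mc M(U),E)^\sim\bigr)$, so testing exactness reduces to whether $\mc M\mapsto\Hom_R(\mc M(U),E)$ is exact, i.e.\ whether $\mc M\mapsto\mc M(U)$ is exact on the relevant sequence (since $\Hom_R(-,E)$ is exact by the cogenerator property). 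Now take a short exact sequence $0\to\mc A\to\mc B\to\mc C\to 0$ in $\QCohX$ for which the sections-over-$U$ functor composed with restriction detects the non-exactness coming from $W$; the failure of $i$ to be exact in Lemma~\ref{giraud-definable} (via Serre's criterion) provides exactly such a witness. Applying $\Hom_R(-,E)$ and the cogenerator property should then show $\HOMqc(\mc B,\mc E)\to\HOMqc(\mc A,\mc E)$ is not an epimorphism.

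The main obstacle I anticipate is choosing the test sequence correctly and matching it to the right open set. Lemma~\ref{giraud-definable} gives non-exactness of the \emph{restriction-then-sections} functor $\QCohU\to\RMod$ for $U$ non-affine, but here $U$ is affine and it is the passage through $W$ that causes trouble; so I must ensure the short exact sequence I feed into $\HOMqc(-,\mc E)$ genuinely records the discrepancy between $\mc M(W)$ and $\mc M(U)\otimes_R\OX(W)$ that drove the proof of Theorem~\ref{no-flat-quotient}. I would likely need to work with $\mc E=\iota_{W,*}(\tilde E_W)$ for an injective cogenerator $E_W$ over a suitable ring attached to $W$, or localize the setup so that Serre's criterion applies directly. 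Reconciling the enriched adjunction of Lemma~\ref{hom-to-direct-image} (which wants $U$ affine) with the non-affineness of $W$ is the delicate point, and getting the exact cogenerator bookkeeping right so that a single non-exact sequence survives is where the real care is needed.
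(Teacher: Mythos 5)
Your setup coincides with the paper's --- the same $U$, $V$, $W$, the same injective $\mc E=\iota_{U,*}(\tilde E)$ with $E$ an injective cogenerator of $\RMod$ for $R=\OX(U)$, the same appeal to Lemma~\ref{hom-to-direct-image}, and the same idea of pushing a Serre-criterion witness from $W$ into $\QCohX$ --- but the central reduction you propose is incorrect, and it conceals exactly the point where all the work lies. You say that since $\HOMqc(\mc M,\mc E)\cong\iota_{U,*}\bigl(\Hom_R(\mc M(U),E)^\sim\bigr)$, ``testing exactness reduces to whether $\mc M\mapsto\Hom_R(\mc M(U),E)$ is exact.'' It does not: the functor $\mc M\mapsto\mc M(U)$ is exact on $\QCohX$ (restriction to $U$ is exact and $U$ is affine), and $\Hom_R(-,E)$ is exact because $E$ is injective, so the inner functor $\mc M\mapsto\Hom_R(\mc M(U),E)^\sim$ into $\QCohU$ is exact; if your reduction were valid, it would prove that $\HOMqc(-,\mc E)$ is \emph{always} exact, the opposite of the goal. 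The entire failure is caused by the outer functor $\iota_{U,*}\colon\QCohU\to\QCohX$, which is only left exact when $W=U\cap V$ is not affine, and your outline never engages with it. In particular, your final claim --- that $\HOMqc(\mc B,\mc E)\to\HOMqc(\mc A,\mc E)$ fails to be an epimorphism for the pushed-forward sequence --- is unsupported: to refute epimorphy one would check sections over the affine open $V$, which by Lemma~\ref{hom-to-direct-image} are $\bigl(\mc B(U)^*\bigr)^\sim(W)\to\bigl(\mc A(U)^*\bigr)^\sim(W)$ where $M^*=\Hom_R(M,E)$, and nothing in your construction controls how the \emph{dualized} modules $\mc A(U)^*$, $\mc B(U)^*$ behave over $W$; the Serre-criterion witness constrains $\mc A$, $\mc B$, $\mc C$ themselves, not their duals.

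The idea you are missing is the paper's double dualization combined with definability. Writing $\mc M^+=\HOMqc(\mc M,\mc E)$, one gets $\mc M^{++}\cong\iota_{U,*}\bigl((\mc M(U)^{**})^\sim\bigr)$, and hence $\mc M^{++}(V)=\mc M^{++}(W)$ since $\mc M^{++}$ is a pushforward from $U$. For the sequence $0\to\mc A\to\mc B\to\mc C\to 0$ obtained by pushing forward from $W$, the modules $A=\mc A(U)$ and $B=\mc B(U)$ lie in the subcategory $\GW\subseteq\RMod$ of Lemma~\ref{giraud-definable}, while $C=\mc C(U)$ does not (this is exactly the non-exactness of sections over $W$). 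Because $\GW$ is \emph{definable}, it is closed under double duals and also detects membership through them ($M$ is a pure submodule of $M^{**}$), by \cite[Corollary 3.4.21]{Pr}; hence $A^{**},B^{**}\in\GW$ but $C^{**}\notin\GW$. Evaluating the double-dual sequence at $V$ then yields $\mc A^{++}(V)=A^{**}$, $\mc B^{++}(V)=B^{**}$, but $\mc C^{++}(V)=\mc C^{++}(W)\not\cong C^{**}$, contradicting the exactness of $0\to A^{**}\to B^{**}\to C^{**}\to 0$. Note that this argument never shows that a \emph{single} application of $\HOMqc(-,\mc E)$ fails on that particular sequence; it shows the composite $(-)^{++}$ fails, which suffices, since exactness of the functor would force exactness of its square. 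Without this step (or some other mechanism for detecting the non-exactness of $\iota_{U,*}$ on sheaves of the form $(M^*)^\sim$), your outline does not close; and your fallback of taking $\iota_{W,*}$ of a cogenerator over ``a suitable ring attached to $W$'' cannot be carried out as stated, since $W$ is not affine and the module-theoretic form of Lemma~\ref{hom-to-direct-image} requires an affine base.
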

\begin{proof}
If $X$ is semiseparated, then Lemma \ref{semiseparated-hom-exact} applies.

Assume that $X$ is not semiseparated; then there are two open affine subsets $U$, $V$ of $X$ such that the intersection $W = U \cap V$ is not affine. Since $X$ is quasiseparated, $W$ is quasicompact. By Serre's criterion \cite[01XF]{Stacks}, there is $\mc A' \in \QCoh(W)$ satisfying $\Hh^1(W, \mc A') \neq 0$. Since $\QCoh(W)$ is a Grothendieck category, there is an embedding $\mc A' \ito \mc B'$ with $\mc B' \in \QCoh(W)$ injective; in particular, $\Hh^1(W, \mc B') = 0$. Let $\mc C' \in \QCoh(W)$ be the cokernel of this embedding; hence we have a short exact sequence $0 \to \mc A' \to \mc B' \to \mc C' \to 0$ with non-exact sequence of sections over $W$.

Put $\mc A = \iota_{W,*}(\mc A')$, $\mc B = \iota_{W,*}(\mc B')$. The direct image functor is left exact; let $\mc C$ be the cokernel of $\mc A \ito \mc B$. Since the restriction to $W$ is an exact functor and $\mc A|_W = \mc A'$, $\mc B|_W = \mc B'$, it follows that $\mc C|_W = \mc C'$. We have obtained a short exact sequence $0 \to \mc A \to \mc B \to \mc C \to 0$ in \QCohX\ with non-exact sequence of sections over $W$.

Let $R = \OX(U)$ and let $E$ be an injective cogenerator of \RMod. Further, put $\mc E = \iota_{U,*}(\tilde E)$. Since $\iota_{U,*}$ is a right adjoint to an exact functor, it preserves injectives, hence $\mc E$ is an injective object of \QCohX. For $\mc M \in \QCohX$, denote by $\mc M^+$ the sheaf $\HOMqc(\mc M, \mc E)$.
We are going to show that the sequence
\[ 0 \to \mc A^{++} \to \mc B^{++} \to \mc C^{++} \to 0 \tag{$++$} \]
cannot be exact by showing that the sections over the open affine set $V$ are not exact.

Denote further $M^* = \Hom_R(M, E)$ for $M \in \RMod$.
By Lemma \ref{hom-to-direct-image}, we have
\begin{multline*}
\mc M^+
= \HOMqc(\mc M, \mc E)
= \iota_{U,*}\bigl(\Hom_R(\mc M(U), \mc E(U))^\sim\bigr) = \\
= \iota_{U,*}\bigl(\Hom_R(\mc M(U), E)^\sim\bigr)
= \iota_{U,*}\bigl(\widetilde{\mc M(U)^*}\bigr),
\end{multline*}
therefore
\begin{multline*}
\mc M^{++}
= \HOMqc(\mc M^+, \mc E)
= \iota_{U,*}\bigl(\Hom_R(\mc M^+(U), E)^\sim\bigr) = \\
= \iota_{U,*}\bigl(\Hom_R(\mc M(U)^*, E)^\sim\bigr)
= \iota_{U,*}\bigl(\widetilde{\mc M(U)^{**}}\bigr)
\end{multline*}
for every $\mc M \in \QCohX$; in particular, $\mc M^{++}(V) = \mc M^{++}(W)$. Put $A = \mc A(U)$, $B = \mc B(U)$, and $C = \mc C(U)$. By the construction, $A = \mc A(W)$, $B = \mc B(W)$, so $A, B \in \GW$. On the other hand $C \subsetneq \mc C(W)$, hence $C \notin \GW$. Indeed, $C$ is the cokernel of $A\ito B$ in $\RMod$ and $\mc C(W)$ is the cokernel of $A\ito B$ in $\GW$, so $C\in \GW$ would imply $C=\mc C(W)$.

Since by Lemma \ref{giraud-definable} $\GW$ is a definable subcategory of \RMod, \cite[Corollary 3.4.21]{Pr} implies that $A^{**}, B^{**} \in \GW$, but $C^{**} \notin \GW$. In other words,
\begin{align*}
\mc A^{++}(V) &= \mc A^{++}(W) = \mc A^{++}(U) = A^{**},\\
\mc B^{++}(V) &= \mc B^{++}(W) = \mc B^{++}(U) = B^{**},
\end{align*}
but
\[ \mc C^{++}(V) = \mc C^{++}(W) \not\cong \mc C^{++}(U) = C^{**}. \]
However, as $(-)^*$ is an exact contravariant functor on \RMod, the sequence $0 \to A^{**} \to B^{**} \to C^{**} \to 0$ is exact. This shows that the sequence $(++)$ is not exact after passing to sections over $V$ as desired.
\end{proof}

\begin{remark}
Note that we have re-proved Theorem \ref{no-flat-quotient}: If $X$ is quasicompact and quasiseparated, but not semiseparated, then by Theorem \ref{hom-not-exact}, there is an injective $\mc E \in \QCohX$ such that $\HOMqc(-, \mc E)$ is not exact; by Lemma \ref{enriched-injective}, this means that the category \QCohX\ cannot have a flat generator.
\end{remark}

\begin{exmpl}
As in Example~\ref{no-flat-double-origin}, it may be instructive to consider the plane with double origin. We again put $R = k[x, y]$, the open subsets $U$, $V$ will be the two copies of $\Spec R$ and $W$ will be the punctured plane.

The following example was suggested to us by Leonid Positselski. The sheafification $0 \to \OU \to \OU \to \mc M \to 0$ of the short exact sequence $0 \to k[x, y] \overset{y}\to k[x, y] \to k[x] \to 0$ of $R$-modules has non-exact sections on $W$. Indeed, a direct computation using the sheaf axiom reveals that the sections on $W$ form only the left exact sequence
\[ 0 \to k[x, y] \overset{y}\to k[x, y] \to k[x^{\pm1}]. \tag{$*$} \]

If we take the sheafification of the same exact sequence of $R$-modules in $\QCoh(V)$ and glue it together with the sequence in $\QCohU$, we obtain a short exact sequence $0 \to \mc A \to \mc B \to \mc C \to 0$ of quasicoherent sheaves on $X=U\cup V$ whose sections on $W$ look like~$(*)$.

Let $E\in\RMod$ be the injective envelope of the simple $R$-module $R/(x, y)$.
The arguments in the proof of Theorem~\ref{hom-not-exact} show that the sections of the double dual $(++)$, where $(-)^+ = \HOMqc(-,\mc E)$ for $\mc E=\iota_{U,*}(\tilde E)$, on the affine open set $V$ are of the form
\[ 0 \to k[[x, y]] \overset{y}\to k[[x, y]] \to k(\!(x)\!), \]
which is again a left, but not right exact sequence.

Although, for the sake of simplicity, this example does not fully follow the proof of Theorem~\ref{hom-not-exact} (the sheaf $\mc B|_W$ in not injective in $\QCoh(W)$ and $E$ is not an injective cogenerator of $\RMod$), it is still sufficient to explicitly illustrate the non-exactness of $\HOMqc(-,\mc E)$ on $\QCohX$.
\end{exmpl}

\begin{remark}
Let $R$ be a commutative ring and $E$ an injective cogenerator of the category \RMod. The contravariant functor $\Hom_R(-, E)$ has found many applications in the model theory of modules and its generalizations (cf.\ \cite[1.3.3]{Pr}; this also works in greater generality over non-commutative rings). This has led to a natural generalization to symmetric closed monoidal Grothendieck categories; in particular, in \cite{EEO}, the functor $\HOMqc(-, \mc E)$, where $\mc E$ is an injective cogenerator of \QCohX, has been used to investigate the properties of ``geometric'' purity.

Theorem \ref{hom-not-exact} shows that this ``dual'' is, perhaps surprisingly, not exact for non-semiseparated schemes. However, it turns out that this is not really an obstacle to using this functor for investigating purity in the same way as in the classical situation, cf.\ \cite[Proposition 4.5]{EEO}. Furthermore, this functor at least reflects exactness, as the next proposition shows.
\end{remark}

\begin{prop}
Let $\mc E$ be an injective cogenerator of\/ \QCohX. Then the functor $\HOMqc(-, \mc E)$ reflects exactness.
\end{prop}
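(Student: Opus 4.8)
The plan is to show that the contravariant functor $\HOMqc(-,\mc E)$ reflects exactness, meaning that if a sequence $\mc A \to \mc B \to \mc C$ of quasicoherent sheaves becomes exact after applying $\HOMqc(-,\mc E)$, then it was already exact. Since $\mc E$ is a \emph{cogenerator}, the strategy is to reduce the internal-hom statement to a statement about the ordinary $\Hom_{\QCohX}$-functor, where the cogenerator property is directly usable. The key observation I would exploit is the adjunction $\Hom_{\QCohX}(\mc G, \HOMqc(\mc M, \mc E)) \cong \Hom_{\QCohX}(\mc G \otimes \mc M, \mc E)$ for any $\mc G$, together with the fact that taking $\mc G = \OX$ (the monoidal unit) recovers $\Hom_{\QCohX}(\mc M, \mc E) \cong \Hom_{\QCohX}(\mc M, \HOMqc(\OX, \mc E))$, so that global $\Hom$ into $\mc E$ factors through the internal hom.

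First I would recall that a sequence in an abelian category is exact if and only if it is exact after applying $\Hom(\mc G, -)$ for a generating family $\{\mc G\}$ (more precisely, one tests exactness against a generator), and dually that a contravariant functor $\Hom(-, \mc E)$ into a \emph{cogenerator} $\mc E$ reflects exactness. The engine of the proof is this second fact applied in $\QCohX$: since $\mc E$ is an injective cogenerator, the contravariant functor $\Hom_{\QCohX}(-, \mc E)\colon \QCohX^{\op} \to \mathbf{Ab}$ is faithful and exact, and a sequence $\mc A \to \mc B \to \mc C$ in $\QCohX$ is exact precisely when $\Hom_{\QCohX}(\mc C, \mc E) \to \Hom_{\QCohX}(\mc B, \mc E) \to \Hom_{\QCohX}(\mc A, \mc E)$ is exact.

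Next I would connect $\HOMqc$ to this ordinary $\Hom$. Suppose applying $\HOMqc(-,\mc E)$ to $\mc A \to \mc B \to \mc C$ yields an exact sequence $\HOMqc(\mc C,\mc E) \to \HOMqc(\mc B,\mc E) \to \HOMqc(\mc A,\mc E)$ of quasicoherent sheaves. I want to deduce exactness of the underlying $\Hom$-sequence into $\mc E$. The idea is to apply the global-sections functor $\Hom_{\QCohX}(\OX, -)$, or more robustly to apply $\Hom_{\QCohX}(\mc G, -)$ for $\mc G$ ranging over a generating set; using the tensor-hom adjunction, $\Hom_{\QCohX}(\mc G, \HOMqc(\mc M,\mc E)) \cong \Hom_{\QCohX}(\mc G \otimes \mc M, \mc E)$. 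Taking $\mc G = \OX$ gives $\Hom_{\QCohX}(\mc M, \mc E)$ directly. Thus exactness of the $\HOMqc$-sequence, combined with the injectivity of $\mc E$ making $\Hom_{\QCohX}(-,\mc E)$ exact on the relevant terms, should force the sequence $\Hom_{\QCohX}(\mc C,\mc E) \to \Hom_{\QCohX}(\mc B,\mc E) \to \Hom_{\QCohX}(\mc A,\mc E)$ to be exact, whence the original sequence is exact by the cogenerator property.

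The main obstacle I anticipate is the bookkeeping around left-exactness versus full exactness: $\HOMqc(-,\mc E)$ is automatically left exact, and one has to be careful that an \emph{exact} image sequence (not merely left exact) genuinely controls all three homology positions of the original sequence. Concretely, the delicate point is passing from exactness of the sheaf-level sequence $\HOMqc(\mc C,\mc E)\to\HOMqc(\mc B,\mc E)\to\HOMqc(\mc A,\mc E)$ to exactness of the global $\Hom$ sequence; since $\HOMqc(-,\mc E)$ need not be right exact (this is the whole point of the paper), I must not assume exactness there, but rather only use that I am \emph{given} exactness of the $\HOMqc$-image and leverage the adjunction and cogenerator property to reflect it back. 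I expect the cleanest route is to verify reflection of exactness at each of the three spots (monomorphisms, epimorphisms, and exactness in the middle) separately, using that $\mc E$ being an injective cogenerator makes $\Hom_{\QCohX}(-,\mc E)$ both faithful and exact as a functor on $\QCohX^{\op}$.
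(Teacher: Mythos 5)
Your overall frame---pass from $\HOMqc(-,\mc E)$ to the ordinary $\Hom_{\QCohX}(-,\mc E)$ by applying $\Hom_{\QCohX}(\OX,-)$, then finish with the injective-cogenerator property---is the same as the paper's, and you correctly flag the delicate point; but you never actually resolve it, and the tools you list cannot. The problem is the step from exactness of the sheaf sequence $0 \to \HOMqc(\mc C,\mc E) \to \HOMqc(\mc B,\mc E) \to \HOMqc(\mc A,\mc E) \to 0$ to exactness of $0 \to \Hom_{\QCohX}(\mc C,\mc E) \to \Hom_{\QCohX}(\mc B,\mc E) \to \Hom_{\QCohX}(\mc A,\mc E) \to 0$. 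The functor you apply, $\Hom_{\QCohX}(\OX,-)$ (or $\Hom_{\QCohX}(\mc G,-)$ for any generator $\mc G$), is only \emph{left} exact---generators of $\QCohX$ are not projective, which is the very theme of this paper---so from the given data you obtain only the left-exact part of the global $\Hom$ sequence. That part does suffice, via the cogenerator property (plus injectivity of $\mc E$ for the middle spot), to show that $\mc B \to \mc C$ is an epimorphism and that the original sequence is exact at $\mc B$; but it gives no control over surjectivity of $\Hom_{\QCohX}(\mc B,\mc E) \to \Hom_{\QCohX}(\mc A,\mc E)$, which is exactly what is needed to conclude that $\mc A \to \mc B$ is a monomorphism. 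Your appeal to ``injectivity of $\mc E$ making $\Hom_{\QCohX}(-,\mc E)$ exact'' is circular at this point: that exactness statement concerns the image of sequences already known to be exact, and whether $\mc A \to \mc B \to \mc C$ is exact is precisely what is being proved.

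The missing ingredient, which the paper imports from outside the formal adjunction yoga, is a cohomology-vanishing statement: by \cite{EEO} (Proposition 4.4 and Lemma 4.7) all three sheaves $\HOMqc(-,\mc E)$ are (geometrically) pure-injective, and by \cite{PS} (Lemma 4.15) pure-injective quasicoherent sheaves have zero cohomology; in particular $\Hh^1\bigl(X, \HOMqc(\mc C,\mc E)\bigr) = 0$, so the long exact cohomology sequence shows that taking global sections of the given short exact sequence of sheaves produces a \emph{short} exact sequence of $\Hom$ groups, including the surjectivity at the right end. With that in hand, the cogenerator argument you describe does finish the proof; without some such input, your plan stalls exactly at the monomorphism spot.
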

\begin{proof}
Assume that
\[ 0 \to \HOMqc(\mc C, \mc E) \to \HOMqc(\mc B, \mc E) \to \HOMqc(\mc A, \mc E) \to 0 \]
is exact. By \cite[Proposition 4.4 \& Lemma 4.7]{EEO}, all the terms are (geometrically) pure-injective. In particular, by \cite[Lemma 4.15]{PS}, $\HOMqc(\mc C, \mc E)$ has zero cohomology, therefore taking global sections produces a short exact sequence
\[ 0 \to \Hom_{\QCohX}(\mc C, \mc E) \to \Hom_{\QCohX}(\mc B, \mc E) \to \Hom_{\QCohX}(\mc A, \mc E) \to 0. \]
As $\mc E$ is a cogenerator, this implies that $0 \to \mc A \to \mc B \to \mc C \to 0$ is exact.
\end{proof}

\end{document}